\renewcommand*{\backref}[1]{}
\renewcommand*{\backrefalt}[4]{
  \ifcase #1 
  [No citations.]
  \or [#2]
  \else [#2]
  \fi }
\let\originalleft\left
\let\originalright\right
\renewcommand{\left}{\mathopen{}\mathclose\bgroup\originalleft}
\renewcommand{\right}{\aftergroup\egroup\originalright}
\newcommand{\calB}{\mathcal{B}}
\newcommand{\calC}{\mathcal{C}}
\newcommand{\calR}{\mathcal{R}}
\newcommand{\calS}{\mathcal{S}}
\newcommand{\calT}{\mathcal{T}}
\newcommand{\RR}{\mathbb{R}}
\newcommand{\homeo}{\mathrel{\cong}} 
\newcommand{\cross}{\times}
\newcommand{\bdy}{\partial} 
\newcommand{\pt}{\rm pt} 
\numberwithin{equation}{section} 
\let\c@figure\c@equation
\numberwithin{figure}{section} 
\theoremstyle{plain}
\newtheorem{theorem}[equation]{Theorem}
\newtheorem{corollary}[equation]{Corollary}
\newtheorem{lemma}[equation]{Lemma}
\theoremstyle{definition}
\newtheorem{definition}[equation]{Definition}
\newtheorem{exercise}[equation]{Exercise}
\newtheorem*{question*}{Question}
\theoremstyle{remark}
\newtheorem{remark}[equation]{Remark}
\newtheorem*{remark*}{Remark}
\newtheorem*{case*}{Case}
\newtheorem*{step*}{Step}
\newtheorem*{claim*}{Claim}
\newcommand{\refsec}[1]{Section~\ref{Sec:#1}}
\newcommand{\refthm}[1]{Theorem~\ref{Thm:#1}}
\newcommand{\refcor}[1]{Corollary~\ref{Cor:#1}}
\newcommand{\reflem}[1]{Lemma~\ref{Lem:#1}}
\newcommand{\reffig}[1]{Figure~\ref{Fig:#1}}
\theoremstyle{plain}
\newtheorem{XXXtheoremQED}[equation]{Theorem} 
\newenvironment{theoremQED}                   
  {\pushQED{\qed}\begin{XXXtheoremQED}}
  {\popQED\end{XXXtheoremQED}}
\newcommand{\fakeenv}{} 
\newenvironment{restate}[2]  
{ 
 \renewcommand{\fakeenv}{#2} 
 \theoremstyle{plain} 
 \newtheorem*{\fakeenv}{#1~\ref{#2}} 
 \begin{\fakeenv}
}
{
 \end{\fakeenv}
}
\newenvironment{restated}[2]  
{ 
 \renewcommand{\fakeenv}{#2} 
 \theoremstyle{definition} 
 \newtheorem*{\fakeenv}{#1~\ref{#2}} 
 \begin{\fakeenv}
}
{
 \end{\fakeenv}
}
\newcommand{\cw}{\operatorname{cw}}
\newcommand{\tw}{\operatorname{tw}}
\newcommand{\Mid}{\operatorname{mid}}
\title[On the tree-width of knot diagrams]{On the tree-width of knot diagrams}
\author[De Mesmay]{Arnaud de Mesmay}
\address{Univ. Grenoble Alpes, CNRS, Grenoble INP, GIPSA-lab, 38000 Grenoble, France}
\email{arnaud.de-mesmay@gipsa-lab.fr}
\author[Purcell]{Jessica Purcell}
\address{School of Mathematics, 9 Rainforest Walk, Room 401, Monash University, VIC 3800, Australia}
\email{jessica.purcell@monash.edu}
\author[Schleimer]{Saul Schleimer}
\address{Mathematics Institute, University of Warwick, Coventry CV4 7AL, United Kingdom}
\email{s.schleimer@warwick.ac.uk}
\author[Sedgwick]{Eric Sedgwick}
\address{School of Computing, DePaul University, 243 S. Wabash Ave, Chicago, IL 60604, USA}
\email{esedgwick@cdm.depaul.edu}
\thanks{This work is in the public domain.}
\begin{document}

\begin{abstract}
We show that a small tree-decomposition of a knot diagram induces a small sphere-decomposition of the corresponding knot.  This, in turn, implies that the knot admits a small essential planar meridional surface or a small bridge sphere.  We use this to give the first examples of knots where any diagram has high tree-width.  This answers a question of Burton and of Makowsky and Mari\~no.
\end{abstract}

\maketitle

\section{Introduction}

The \emph{tree-width} of a graph is a parameter quantifying how ``close'' the graph is to being a tree.  While tree-width has its roots in structural graph theory, and in particular in the Robertson-Seymour theory of graph minors, it has become a key tool in algorithm design in the past decades.  This is because many algorithmic problems, on small tree-width graphs, can be solved efficiently using dynamic programming techniques.  

The algorithmic success of tree-width has also been demonstrated in the realm of topology.  Makowsky and Mari\~no showed that, despite being $\#P$-hard to compute in general, the Jones and Kauffman polynomials can be computed efficiently on knot diagrams where the underlying graphs have small tree-width~\cite{MakowskyMarino03}.  Recently Burton obtained a similar result for the HOMFLY-PT polynomial~\cite{Burton18}.  In a different vein, Bar-Natan used divide-and-conquer techniques to design an algorithm to compute the Khovanov homology of a knot; he conjectures that his algorithm is very efficient on graphs of low cut-width~\cite{BarNatan07}, a graph parameter closely related to tree-width.  Similarly, many topological invariants can be computed efficiently on manifold triangulations for which the face-pairing graphs have small tree-width; this is nicely captured by a generalization of Courcelle's theorem due to Burton and Rodney~\cite{BurtonDowney17}.

A \emph{diagram} $D$ of a knot $K$ is a four-valent graph embedded in $S^2$, the two-sphere, with some ``crossing'' information at the vertices.  Thus, we can apply the definition of tree-width directly to $D$.  The \emph{diagrammatic tree-width} of $K$ is then the minimum of the tree-widths of its diagrams.  Burton~\cite[page~2694]{BurtonEtAl15} and Makowsky and Mari\~no~\cite[page~755]{MakowskyMarino03} pose a natural question: is there a family of knots where the tree-width increases without bound? For example, while the usual diagrams of torus knots $T(p,q)$ (see Figure~\ref{F:torus}) can be easily seen to have tree-width going to infinity with $p$ and $q$, it could be that other diagrams of these torus knots have a uniformly bounded tree-width.

\begin{figure}[h]
  \centering
  \includegraphics[width=7cm]{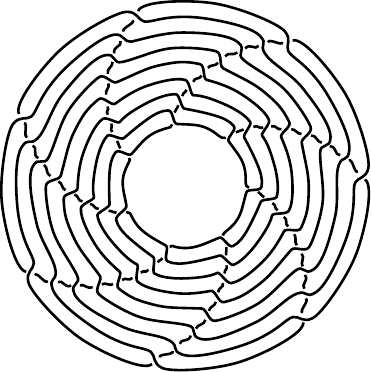}
  \caption{The usual diagram of a torus knot $T(9,7)$.}
  \label{F:torus}
  \end{figure}

In this article, we give several positive answers.  Our main tool comes from the analysis of surfaces in three-manifolds.  We defer the precise definitions to \refsec{Background}.

\begin{restate}{Theorem}{Thm:Main}
Suppose that $k$ is an integer and $K$ is a knot having a diagram with tree-width at most $k$.  Then either
\begin{enumerate}
\item there exists an essential planar meridional surface for $K$ with at most $8k + 8$ boundary components or
\item $K$ has bridge number at most $4k + 4$.
\end{enumerate}
\end{restate}

The main idea in the proof of \refthm{Main} is as follows.  Suppose that $D$ is the given small tree-width diagram of a knot $K$.  In \reflem{Sphere} we transform the given decomposition of $D$ into a family of disjoint spheres, each meeting $K$ in a small number of points.  This partitions the ambient space into simple pieces.  We call this a \emph{sphere-decomposition} of $K$.  In \reflem{Tubing} we turn this into a \emph{multiple Heegaard splitting}, as introduced by Hayashi and Shimokawa~\cite[page~303]{HayashiShimokawa01}.  Their thin position arguments, roughly following \cite{Gabai87iii}, establish the existence of the surfaces claimed by \refthm{Main}.

To find families of knots with diagrammatic tree-width going to infinity, we use the contrapositive of \refthm{Main}.  The following families of knots have neither small essential planar meridional surfaces nor small bridge spheres. 
\begin{itemize}
\item 
Torus knots $T(p,q)$ where $p$ and $q$ are large, coprime, and roughly equal.  These have no essential planar meridional surface by work of Tsau~\cite[page~199]{Tsau94}.  
Also, the bridge number is $\min (p, q)$, by work of Schubert~\cite[Satz~10]{Schubert54}.
\item 
Knots with a high distance Heegaard splitting, for example following Minsky, Moriah, and Schleimer~\cite[Theorem~3.1]{MinskyMoriahSchleimer07}. 
A theorem of Scharlemann~\cite[Theorem~3.1]{Scharlemann06} shows that such knots cannot have essential planar meridional surfaces in their complements with a small number of boundary components.  A theorem of Tomova~\cite[Theorem~1.3]{Tomova08} shows that such knots cannot have small bridge spheres.
\item 
Highly twisted plats, for example following Johnson and Moriah~\cite[Theorem~1.2]{JohnsonMoriah16}.  They show that the given bridge sphere has high distance.  A theorem of Bachman and Schleimer~\cite[Theorem~5.1]{BachmanSchleimer05a} excludes small essential planar meridional surfaces.  A theorem of Tomova~\cite[Theorem~10.3]{Tomova07} excludes bridge spheres with smaller bridge number than the given one. 
\end{itemize}

\begin{corollary}
For every integer $k > 0$, there is a knot $K$ with diagrammatic tree-width at least $k$.
\end{corollary}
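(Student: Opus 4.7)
The plan is to argue by contrapositive from \refthm{Main}, using one of the three families listed just before the corollary. The simplest and most self-contained choice is the family of torus knots, so I would proceed with those.

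Fix an integer $k > 0$. I would choose coprime integers $p, q$ with $\min(p,q) > 4k+4$, for example $p$ and $q$ consecutive integers that are both larger than $4k+4$, and take $K = T(p,q)$. The task is then to verify that neither alternative of \refthm{Main} can hold for this $k$, whence the theorem forces the diagrammatic tree-width of $K$ to exceed $k$.

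For alternative (1), I would cite Tsau's result~\cite{Tsau94}, which asserts that the complement of a nontrivial torus knot contains no essential planar meridional surface at all; in particular there is no such surface with at most $8k+8$ boundary components. For alternative (2), I would cite Schubert's computation~\cite{Schubert54} of the bridge number of torus knots, namely $b(T(p,q)) = \min(p,q)$, which by our choice exceeds $4k+4$. Thus both conclusions of \refthm{Main} fail, and so the hypothesis must fail as well: every diagram of $T(p,q)$ has tree-width strictly greater than $k$.

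The only subtlety, and hence the main (small) obstacle, is confirming that the cited results apply in the precise form needed — in particular, that Tsau's ``no essential planar meridional surface'' conclusion is stated for the knot exterior in the same sense as in \refsec{Background}, and that coprimality of $p$ and $q$ is the only hypothesis required for both citations. Once this bookkeeping is done, no further calculation is needed, and the same template would work verbatim with either of the other two families (high-distance Heegaard splittings, or highly twisted plats), which would moreover give knots that are not torus knots and so demonstrate the robustness of the lower bound.
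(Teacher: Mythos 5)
Your proposal is correct and matches the paper's own argument: the paper also derives the corollary by applying the contrapositive of \refthm{Main} to the listed families, with the torus knot case resting on exactly the citations you use (Tsau for the absence of essential planar meridional surfaces, Schubert for $b(T(p,q))=\min(p,q)$). Your version simply spells out the choice of $p,q$ explicitly, which the paper leaves implicit.
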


\begin{proof}
By \refthm{Main}, the three families of knots above have diagrammatic tree-width going to infinity.
\end{proof}

\subsection{Relations with the tree-width of the complement}

Two recent works~\cite{HuszarSpreerWagner17, MariaPurcell18} have investigated the interplay between the tree-width of a three-manifold and its geometric or topological properties.  Here the tree-width of a three-manifold $M$ is the minimum of the tree-widths of the face-pairing graphs of its triangulations.  Interestingly, both articles find it more convenient to work with \emph{carving-width} (also called \emph{congestion}) than with tree-width directly.  This is also the case in our paper; see \refsec{Background} for definitions.

Husz\'{a}r, Spreer and Wagner~\cite[Theorem~2]{HuszarSpreerWagner17} give a result similar to ours, but for three-manifolds.  They prove that a certain family of manifolds $\{M_n\}$, constructed by Agol~\cite{Agol03}, has the property that any triangulation of $M_n$ has tree-width at least $n$.  Their result differs from ours in two significant ways.  For one, Agol's examples are non-Haken, while we work with knots and their complements, which are necessarily Haken.  Furthermore, there is an inequality between the diagrammatic tree-width of a knot and the tree-width of its complement which cannot be reversed.  That is, given a knot diagram of tree-width $k$, there are standard techniques to build a triangulation of its complement of tree-width $O(k)$.  For example, one may use the method embedded in SnapPy~\cite[\texttt{link\_complement.c}]{SnapPy}; see the discussion of~\cite[Remark~6.2]{MariaPurcell18}.

To see that this inequality cannot be reversed recall that \refthm{Main} gives a sequence of torus knots with tree-width going to
infinity.  On the other hand we have the following. 

\begin{restate}{Lemma}{Lem:Torus}
The complement of the torus knot $T(p,q)$ admits a triangulation of constant tree-width.
\end{restate}

\noindent
This is well-known to the experts; we include a proof in \refsec{Background}. 
There is much more to say about the tree-widths of triangulations of knot complements; however in this article we will restrict ourselves to knot diagrams. 

\subsection*{Acknowledgements}
We thank Ben Burton for his very provocative questions, and Kristof Husz{\'a}r and Jonathan Spreer 
for enlightening conversations.  We thank the Mathematisches Forschungsinstitut Oberwolfach and the organizers of Workshop 1542, as well as the Schlo\ss{} Dagstuhl Leibniz-Zentrum f\"ur Informatik and the organizers of Seminar 17072.  Parts of this work originated from those meetings.
The work of A. de Mesmay is partially supported by the French ANR project ANR-16-CE40-0009-01 (GATO) and the CNRS PEPS project COMP3D. Purcell was partially supported by the Australian Research Council.

\section{Background}
\label{Sec:Background}

\subsection{Graph Theory}

Unless otherwise indicated, all graphs will be simple and connected.
The tree-width of a graph measures quantitatively how close the graph is to a tree.  Although we provide a definition below for completeness, we will not use it in the proof of \refthm{Main}, as we rely instead on a roughly equivalent variant.

\begin{definition}
Suppose $G = (V, E)$ is a graph.  A \emph{tree-decomposition} of $G$ is a pair $(\calT, \calB)$ where $\calT$ is a tree, $\calB$ is a collection of subsets of $V$ called \emph{bags}, and the vertices of $\calT$, called \emph{nodes}, are the members of $\calB$.  Additionally, the following properties hold:

\begin{itemize}
\item $\bigcup_{B \in \calB} B = V$
\item For each $uv \in E$, there is a bag $B \in \calB$ with $u, v \in B$.
\item For each $u \in V$, the nodes containing $u$ induce a connected subtree of $\calT$.
\end{itemize}

\noindent
The \emph{width} of a tree-decomposition is the size of its largest bag, minus one.  The \emph{tree-width} of $G$, denoted by $\tw(G)$ is the minimum width taken over all possible tree-decompositions of $G$.
\end{definition}

Tree-width has many variants, which all turn out to be equivalent up to constant factors.  For our purposes, we will rely on the concept of carving-width; it has geometric properties that are well-suited to our setting. 

\begin{definition}
Suppose that $G$ is a graph.  A \emph{carving-decomposition} of $G$ is a pair $(\calT, \phi)$ where $\calT$ is a binary tree and $\phi$ is a bijection from the vertices of $G$ to the leaves of $\calT$.

For an edge $e$ of $\calT$, the \emph{middle set} of $e$, denoted $\Mid(e)$ is defined as follows.  Removing $e$ from $\calT$ breaks 
$\calT$ into two subtrees $S$ and $T$.  The leaves of $S$ and $T$ are mapped via $\phi$ to vertex sets $U$ and $V$.  Then $\Mid(e)$ is the set of edges connecting a vertex of $U$ to a vertex of $V$.

The \emph{width} of an edge $e$ is the size of $\Mid(e)$.  The width of the decomposition $(\calT, \phi)$ is the maximum of the widths of the edges of $\calT$.  Finally the \emph{carving-width} $\cw(G)$ is the minimum possible width of a carving-decomposition of $G$.
\end{definition}

The carving-width of a graph of constant degree is always within a constant factor of its tree-width, as follows. 

\begin{theoremQED}\cite[page~111 and Theorem~1]{Bienstock90}
\label{Thm:CW}
Suppose that $G$ is a graph with degree at most $d$.  Then we have: 
\[
\frac{2}{3} \cdot (\tw(G) + 1) \leq  \cw(G) \leq d \cdot (\tw(G) + 1). \qedhere
\]
\end{theoremQED}

Recall that a \emph{bridge} in a connected graph $G$ is an edge separating $G$ into more than one connected component.  A \emph{bond carving-decomposition} is one where, for any edge $e$ in the tree $\calT$, the associated vertex sets $U$ and $V$ induce connected subgraphs.  One of the strengths of the notion of a carving-decomposition is the following theorem of Seymour and Thomas (see also the discussion in Marx and Piliczuk~\cite[Section~4.6]{MarxPilipczuk15}).

\begin{theorem}\cite[Theorem~5.1]{SeymourThomas94}
\label{Thm:SeymourThomas}
Suppose that $G$ is a simple connected bridgeless graph with at least two vertices and with carving-width at most $k$.  Then there exists a bond carving-decomposition of $G$ having width at most $k$.  \qed
\end{theorem}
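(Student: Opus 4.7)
The plan is to argue by contradiction using an exchange move on a minimum carving-decomposition, with the submodularity of the edge-boundary function $\delta$ on $G$ as the central tool.

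Among all carving-decompositions of $G$ of width at most $k$, I would choose $(\calT, \phi)$ that additionally lexicographically minimizes the pair $(w, N)$, where $w$ is the width and $N$ counts the edges $e$ of $\calT$ for which at least one of $G[U_e]$ or $G[V_e]$ is disconnected.  Supposing for contradiction that $N > 0$, fix a bad edge $e$ with $G[U_e]$ disconnected, pick one connected component $U_1$ of $G[U_e]$, and set $U_2 = U_e \setminus U_1$.  Since no edge of $G$ crosses between $U_1$ and $U_2$, the boundary splits as $\delta(U_e) = \delta(U_1) \sqcup \delta(U_2)$, so both $|\delta(U_i)| \leq |\Mid(e)| \leq k$.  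The bridgeless hypothesis forces $|\delta(U_i)| \geq 2$, so neither component is an isolated endpoint; this prevents degeneracies when reshaping the binary tree.

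The heart of the argument is a two-stage restructuring of $\calT$.  First, within the subtree on the $U$-side of $e$, perform repeated local rotations so that $U_1$ and $U_2$ come to occupy disjoint subtrees $\calR_1, \calR_2$ meeting at a node adjacent to $e$.  Each elementary rotation exchanges a pair of sides $A, B$ for $A \cup B$ and $A \cap B$, and the submodular inequality
\[
|\delta(A \cup B)| + |\delta(A \cap B)| \leq |\delta(A)| + |\delta(B)|
\]
ensures that no middle set grows beyond $k$.  Second, detach $\calR_1$ from $\calT$ and re-graft it into the $V$-side subtree $\calT^V$ by subdividing a carefully chosen edge $f$.  Along the path in $\calT^V$ from $e$ to $f$, middle sets change from $\delta(V')$ to $\delta(V' \cup U_1)$; iterating the submodular inequality along this path shows that, for a judicious choice of $f$, all the new middle sets remain bounded by $k$.

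The hard part will be to verify that the restructured tree has width at most $k$ and strictly smaller count $N$, contradicting the minimality of $(\calT, \phi)$.  For the count, the bad edge $e$ is eliminated, and every new side is either an unchanged subset of an old side, or the union of an old side of $\calT^V$ with $U_1$; in the latter case connectedness of the resulting side is inherited from the $V$-side together with the fact that $U_1$ attaches through the edges of $\delta(U_1) \subseteq \Mid(e)$.  For the width, the delicate point is the submodular chain argument in the second stage, since a careless choice of $f$ could produce a middle set exceeding $k$; the correct $f$ is the last edge on the $V$-side path at which the chain inequality still leaves $|\delta(V' \cup U_1)| \leq k$.  Assuming both verifications succeed, the minimality of $(\calT, \phi)$ is contradicted and the theorem follows.
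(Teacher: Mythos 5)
The paper does not actually prove this statement: it is quoted verbatim from Seymour and Thomas (\emph{Call routing and the ratcatcher}, Theorem~5.1) and closed with an immediate \qedsymbol, so there is no in-paper argument to compare yours against. Your proposal is therefore an attempt to reprove a cited black box, and as written it has genuine gaps at exactly the points you flag as ``the hard part.'' The most concrete one is the second-stage width bound. For an edge $g$ on the path from $e$ to the grafting edge $f$, the relevant sets $W_g \subseteq V_e$ and $U_1 \subseteq U_e$ are \emph{disjoint}, so submodularity degenerates: $|\delta(W_g \cap U_1)| = |\delta(\emptyset)| = 0$ and you obtain only $|\delta(W_g \cup U_1)| \le |\delta(W_g)| + |\delta(U_1)| \le 2k$, not $\le k$. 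Iterating the inequality along the path does not improve this, because every application has the same defect. What you actually need is the exact identity $|\delta(W_g \cup U_1)| = |\delta(W_g)| + |\delta(U_1)| - 2|E(U_1, W_g)|$, i.e.\ a guarantee that at least half of the edges of $\delta(U_1)$ (all of which run into $V_e$, since $U_1$ is a component of $G[U_e]$) land inside $W_g$ for \emph{every} $g$ on the path; nothing in your choice of $f$ ensures this, and this is precisely where the Seymour--Thomas argument does its real work.

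Two further steps are unsubstantiated. First, the stage-one claim that ``repeated local rotations'' separate $U_1$ from $U_2$ into disjoint subtrees while submodularity keeps every intermediate middle set at most $k$: this is a global restructuring of the subtree, and you have not shown it factors into elementary exchanges each of the form $\{A,B\} \mapsto \{A\cup B, A\cap B\}$ with both $A$ and $B$ being sides already present of size at most $k$. Second, the potential $N$ need not strictly decrease under your move: each tree edge has \emph{two} sides, and while $W_g \cup U_1$ may be connected, its complement $(V(G)\setminus W_g)\setminus U_1$ is obtained by deleting $U_1$ from a previously connected side and can become disconnected, creating new bad edges you do not count; moreover $U_2 = U_e \setminus U_1$ may itself still be disconnected, so the edge $e$ you are trying to repair can remain bad. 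A workable potential has to be chosen more carefully (e.g.\ summing the number of components over all sides, or minimizing $\sum_A |\delta(A)|$ as in Seymour--Thomas), and the connectivity of complements must be tracked explicitly. Until these three points are filled in, the argument is a plausible outline rather than a proof; for the purposes of this paper it is also unnecessary, since the result is legitimately imported from the literature.
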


Suppose that $G$ has a bond carving-decomposition.  Suppose also that $G$ is \emph{planar}: it comes with an embedding into $S^2$.  Then each middle set $\Mid(e)$ for $G$ gives a Jordan curve $\gamma_e \subset S^2$ separating the two vertex sets $U$ and $V$.  One can take, for example, the simple cycle in the dual graph $G^*$ corresponding to the cut.  After a small homotopy of each, we can assume that these Jordan curves are pairwise disjoint.  We say that a family of pairwise disjoint Jordan curves, crossing $G$ transversely, \emph{realizes} a carving-decomposition $(\calT, \phi)$ if the partitions of the vertex sets that it induces correspond to those of $(\calT, \phi)$.  \refthm{SeymourThomas} and the above discussion yield the following.

\begin{corollary}
\label{Cor:Realize}
Suppose that $G$ is a bridgeless planar graph with at least two vertices and with carving-width at most $k$.  Then there exists a family of pairwise disjoint Jordan curves realizing a bond carving-decomposition of $G$ of width at most $k$. \qed
\end{corollary}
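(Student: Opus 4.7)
The plan is to combine \refthm{SeymourThomas} with the geometric construction outlined in the paragraph preceding the statement. First I would apply \refthm{SeymourThomas} to the graph $G$ to extract a bond carving-decomposition $(\calT, \phi)$ of width at most $k$; the hypotheses on $G$ (simple, connected, bridgeless, at least two vertices, carving-width at most $k$) are exactly what is needed to invoke that theorem.

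Next, I would turn the combinatorial object $(\calT, \phi)$ into a geometric one by producing, for each edge $e$ of $\calT$, a Jordan curve $\gamma_e \subset S^2$ that realizes the associated partition. Fix $e$ and let $U, V$ be the associated vertex partition of $G$. Because $(\calT, \phi)$ is a bond decomposition, both $U$ and $V$ induce connected subgraphs of $G$, so the edge cut $\Mid(e)$ is a minimal edge cut of $G$. By the standard planar duality between bonds and simple cycles, the dual edges in $G^*$ corresponding to $\Mid(e)$ form a simple cycle in $G^*$, which traces out the desired Jordan curve $\gamma_e \subset S^2$; by construction $\gamma_e$ meets $G$ transversely only in edges of $\Mid(e)$, crossing each such edge exactly once, and it separates $U$ from $V$ in $S^2$.

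The final step, and the only real technical point, is to make the family $\{\gamma_e\}_{e \in E(\calT)}$ pairwise disjoint: different middle sets may share edges of $G$, and the corresponding dual cycles may overlap along dual edges or coincide at dual vertices of $G^*$. I would resolve this by a small isotopy of each $\gamma_e$, supported in a tubular neighbourhood of the $1$-skeleton of $G^*$ in $S^2$, pushing the curves slightly into the interiors of the faces of $G$ without altering the set of edges of $G$ that each curve crosses. The main obstacle in this step is compatibility across the tree, but the subtrees of $\calT$ determine a laminar family of vertex subsets of $V(G)$, and so the Jordan curves can be ordered and perturbed consistently with their nesting in $S^2$. The resulting pairwise disjoint family realizes the bond carving-decomposition $(\calT, \phi)$ of width at most $k$, which establishes the corollary.
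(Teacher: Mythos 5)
Your proposal is correct and follows essentially the same route as the paper: invoke Theorem~\ref{Thm:SeymourThomas} to obtain a bond carving-decomposition, use the planar duality between bonds and simple dual cycles to realize each middle set as a Jordan curve, and perturb the resulting curves to be pairwise disjoint (the paper treats this last step as a small homotopy, exactly as you do, with the laminar/nesting structure making the perturbation consistent).
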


\subsection{Knots}
We refer to Rolfsen's book~\cite{Rolfsen90} for background on knot theory.  Here we will use the equatorial two-sphere $S^2$, and its containing three-sphere $S^3$, as the canvas for our knot diagrams.  In this way we avoid choosing a particular point at infinity.  If one so desires, they may place the point at infinity on the equatorial two-sphere and so obtain the $xy$--plane in $\RR^3$. 

A \emph{knot} $K$ is a regularly embedded circle $S^1$ inside of $S^3$.  One very standard way to represent $K$ is by a \emph{knot diagram}: we move $K$ to be generic with respect to geodesics from the north to south poles and we then project $K$ onto the equatorial two-sphere.  We add a label at each double point of the image recording which arc was closer to the north pole.  

Any knot has infinitely many knot diagrams, even when considering diagrams up to isotopy.  The four-valent graph underlying a knot diagram need not be simple, but it is always connected.  To avoid technical issues we may subdivide edges, introducing vertices of valence two, to ensure our graphs are simple.  Since knot diagrams are four-valent, this makes at most a constant difference in any relevant quantity. 

\begin{definition}
Suppose that $K$ is a knot with diagram $D$.  The \emph{tree-width} of $D$ is defined to be the tree-width of the underlying graph (after any subdivision needed to ensure simplicity).  We define the \emph{carving-width} of $D$ similarly.

The \emph{diagrammatic tree-width} (respectively \emph{diagrammatic carving-width}) of $K$ is defined to be the minimum tree-width (respectively carving-width) taken over all diagrams of $K$.
\end{definition}

\begin{remark}
It is relatively easy to find knots with small diagrammatic tree-width.  
For example, any connect sum of trefoil knots has uniformly bounded diagrammatic tree-width~\cite[Section~2.G]{Rolfsen90}. 
For example, any two-bridge knot has uniformly bounded diagrammatic tree-width~\cite[Section~4.D and Exercise~10.C.6]{Rolfsen90}.  
On the other hand, it is an exercise to show that any knot (even the unknot) has \emph{some} diagram of high tree-width.  
This then motivates the question of Burton~\cite[page~2694]{BurtonEtAl15} and of Makowsky and Mari\~no~\cite[page~755]{MakowskyMarino03}: is there a family of knots $\{K_n\}$ so that \emph{every} diagram of $K_n$ has tree-width at least $n$?
\end{remark}

\subsection{Tangles}

\begin{figure}[t]
  \centering
  \includegraphics[width=12cm]{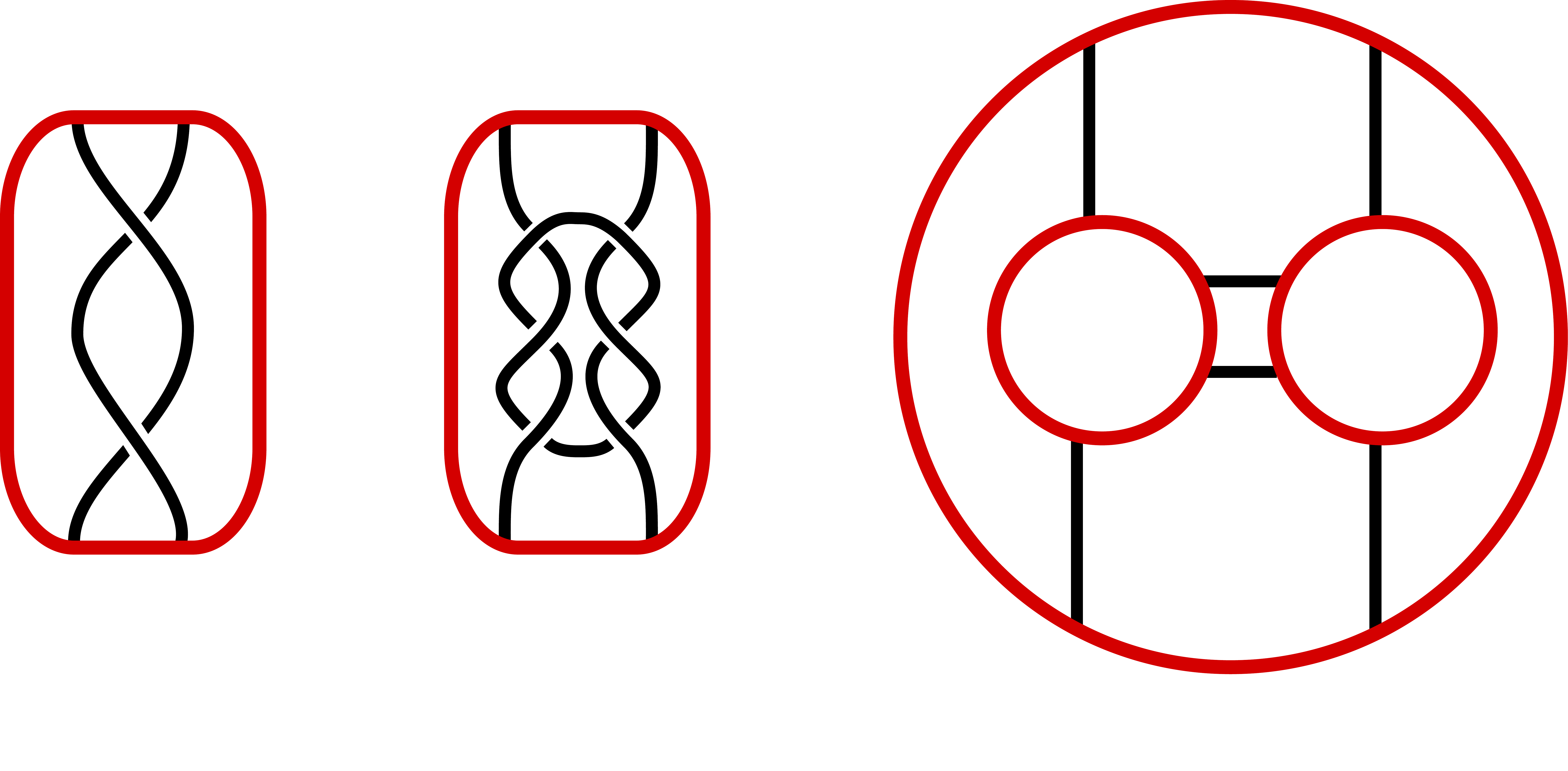}
  \caption{Three tangles represented by two-dimensional projections. The first one is trivial in a three-ball, the second one is non-trivial in a three-ball and the third one is flat in the solid pants.}
    \label{Fig:Tangles}
  \end{figure}

Suppose that $M$ is a compact, connected three-manifold.  A \emph{tangle} $T \subset M$ is a properly embedded finite collection of pairwise disjoint arcs and loops.  We call $(M, T)$ a \emph{three-manifold/tangle pair}, see \reffig{Tangles} for examples.  We call the components of $T$ the \emph{strands} of the tangle.  We take $N(T) \subset M$ to be a small, closed, tubular neighborhood of $T$.  Let $n(T)$ be the relative interior of $N(T)$.  So if $\alpha \subset T$ is a strand then $n(\alpha)$ is homeomorphic to an open disk crossed with a closed interval (or with a circle).  We define the \emph{tangle complement} to be $M_T = M - n(T)$.   Now suppose that $F \subset M$ is a properly embedded surface (two-manifold) which is transverse to $T$.  In particular, $T \cap \bdy F = \emptyset$.  Shrinking $n(T)$ if needed, we define $F_T = F - n(T)$.  Note that $F_T$ is properly embedded in $M_T$.

For any surface $F$, a simple closed properly embedded curve $\alpha \subset F$ is \emph{essential} on $F$ if it does not cut a disk off of $F$.  A properly embedded arc $\alpha \subset F$ is \emph{essential} on $F$ if it does not cut a \emph{bigon} off of $F$: a disk $D \subset F$ with $\bdy D = \alpha \cup \beta$ and with $\beta \subset \bdy F$. 

Suppose that $(M, T)$ is a three-manifold/tangle pair.  A \emph{meridian} for a tangle $T$ is an essential simple closed curve in $\bdy N(T) - \bdy M$ that bounds a properly embedded disk in $N(T)$.  The given disk is called a \emph{meridional disk}. A surface $F_T$ is \emph{meridional} if $\bdy F$ is empty and thus every component of $\partial F_T$ is a meridian of $T$. A surface is \emph{planar} if it is a subsurface of the two-sphere. For example, if $(B^3,T)$ is a three-ball/tangle pair, the boundary of the three-ball is a planar meridional surface. Similarly, a meridian for a solid torus $D \times S^1$ is an essential simple closed curve on the boundary $S^1 \times S^1$ bounding a properly embedded disk. For example, any disk of the form $D \cross \{\pt\}$, with product structure as above, is a meridional disk, see~\reffig{compDisks}, right. 

\begin{figure}[t]
  \centering
  \includegraphics[width=12cm]{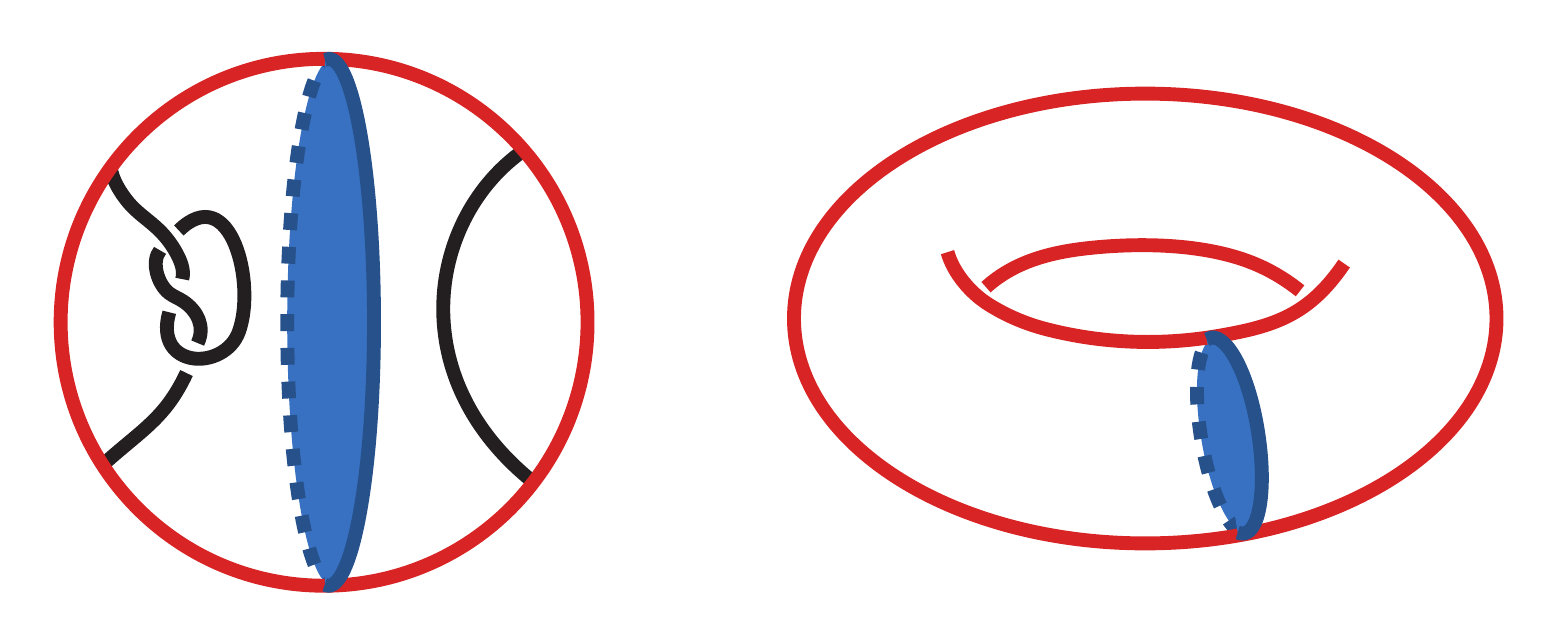}
  \caption{Left: a compressing disk for a planar meridional surface. Right: a meridional disk for the boundary of a solid torus.}
    \label{Fig:compDisks}
  \end{figure}

With the notion of meridians in hand, we can give the promised proof of \reflem{Torus}.  

\begin{lemma}
\label{Lem:Torus}
The complement of the torus knot $T(p,q) \subset S^3$ admits a triangulation of constant tree-width.
\end{lemma}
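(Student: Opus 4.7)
The plan is to exhibit, for each pair of coprime integers $(p,q)$, a triangulation of the complement $M = S^3 \setminus n(T(p,q))$ whose face-pairing graph has tree-width bounded by a universal constant.  The key observation is that $M$ is Seifert fibered over the orbifold $D(p,q)$, a disk with two cone points of orders $p$ and $q$.  Removing small disks about the two cone points decomposes this base as a pair of pants $P$ together with two small disks; correspondingly one obtains a decomposition $M = N \cup V_p \cup V_q$, where $N \cong P \times S^1$ and where $V_p$, $V_q$ are the Seifert fibered solid tori forming neighborhoods of the two exceptional fibers.

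The first step will be to triangulate $N$ using a constant number of tetrahedra.  Since the homeomorphism type of $P \times S^1$ is independent of $p$ and $q$, one can triangulate $P$ with a bounded number of triangles and then construct a prism triangulation of $N$ by splitting each triangular prism into three tetrahedra.  I would arrange that each of the three boundary tori of $N$ carries a one-vertex triangulation with two triangles, which is convenient for the next step.  I would then triangulate each of $V_p$ and $V_q$ as a \emph{layered solid torus}, in the sense of Jaco and Rubinstein, choosing the meridional slopes so as to realize the correct Seifert invariants of the fibration.  A layered solid torus is built as a linear sequence of tetrahedra, each glued to the previous one across two adjacent boundary triangles, so its face-pairing graph is a path (possibly with multi-edges).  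While the number of tetrahedra will grow with $p$ or $q$, the face-pairing graph remains a path of tree-width at most two.

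Gluing these pieces along the relevant boundary tori yields a triangulation of $M$ whose face-pairing graph consists of a bounded ``core'' subgraph $G_N$ (coming from $N$) with two paths attached (coming from $V_p$ and $V_q$), each joined to $G_N$ along a bounded set of vertices corresponding to shared boundary faces.  Such a graph has tree-width at most $\tw(G_N) + O(1)$, which is a universal constant independent of $p$ and $q$.  The main technical point I would need to check is the existence of layered solid tori realizing the required meridional slopes while matching the prescribed boundary triangulation; this is standard in the theory of layered triangulations and reduces to an arithmetic statement about continued fraction expansions of slopes, so it is not a serious obstacle.
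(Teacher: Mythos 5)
Your proposal is correct, and at the level of the actual triangulation it produces essentially the same object as the paper: a constant-size central piece capped off by two Jaco--Rubinstein layered solid tori, whose path-like face-pairing graphs are what make the whole construction work. The difference is in how the central piece is described and, more importantly, in how one certifies that the resulting manifold really is the exterior of $T(p,q)$. The paper takes $T^2 \times [-1,1]$, drills out a curve $a \times \{0\}$ from the middle torus, and caps off with layered solid tori of slopes $p/u$ and $q/v$ where $pv - qu = 1$; correctness is then immediate from the fact that two solid tori glued along a torus with meridians meeting once form $S^3$, and that the drilled curve visibly meets the two meridians $p$ and $q$ times, so it is a $(p,q)$--curve on a Heegaard torus. (Note that the paper's central piece $Q = T^2 \times I \setminus n(a \times \{0\})$ is homeomorphic to your $P \times S^1$, so the decompositions agree up to homeomorphism.) Your route instead starts from the Seifert fibration of the complement over the disk with cone points of orders $p$ and $q$, and identifies the manifold by matching Seifert invariants. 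That works, but it pushes the burden onto a recognition step: you must compute which slopes on the two boundary tori of $P \times S^1$ (measured against the fibration) yield the torus knot exterior rather than some other Seifert fibered space over $D^2(p,q)$, and then realize exactly those slopes by layered solid tori on the chosen one-vertex boundary triangulations. This is the ``main technical point'' you flag; it is indeed standard (any slope other than the three edge slopes is realized by a layered solid torus, via continued fractions), but the paper's drilling construction sidesteps it entirely and is the more self-contained of the two arguments.
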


We prove this by building an explicit triangulation of the complement of torus knots which is very path-like.  We use Jaco and Rubinstein's layered triangulations~\cite{JacoRubinstein06}.

\begin{proof}[Proof of \reflem{Torus}]
Recall that the two-torus $T \homeo S^1 \cross S^1$ has a triangulation with exactly one vertex, three edges, and two triangles.  We call this the \emph{standard triangulation} of $T$; we label the edges by $a, b, c$.  We form $P = T \cross [-1, 1]$ and triangulate it with two triangular prisms, each made out of three tetrahedra.  Without changing the upper or lower boundaries ($T \cross \{\pm 1\}$) we subdivide the triangulation of $P$ to make $n(a \cross \{0\})$ an open subcomplex.   Finally, we form $Q = P - n(a \cross \{0\})$, together with its triangulation, by removing those open cells. 

Since the torus knot $T(p, q)$ is connected, the integers $p$ and $q$ are coprime. Using B\'ezout's identity, pick $u$ and $v$ such that $pv - qu = 1$.  Following Jaco and Rubinstein~\cite[Section~4]{JacoRubinstein06}, build two layered solid tori $U$ and $V$ of slopes $p/u$ and $q/v$: these are one-vertex triangulations of $D \cross S^1$ so that
\begin{itemize}
\item
the boundary of each is a standard triangulation of the two-torus, 
\item
the face-pairing graph for the triangulation of $U$ and $V$ is a daisy chain graph (see Figure~\ref{F:daisy}), and
\item
the meridian $\mu \subset \bdy U$ crosses the edges $(a, b, c)$ in $\bdy U$ exactly $(p, u, p + u)$ times while
\item
the meridian $\nu \subset \bdy V$ crosses the edges $(a, b, c)$ in $\bdy V$ exactly $(q, v, q + v)$ times.
\end{itemize}

\begin{figure}[t]
  \centering
  \includegraphics[width=8cm]{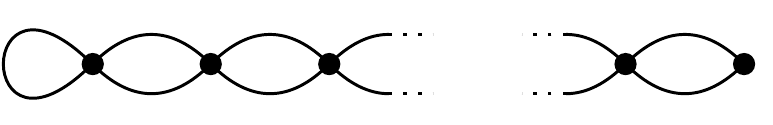}
  \caption{The daisy chain graph.}
  \label{F:daisy}
  \end{figure}

We now glue $U$ to $P$ by identifying $\bdy U$ with the upper boundary of $P$, respecting the $a$, $b$, $c$ labels.  Similarly we glue $V$ to $P$ along the lower boundary, again respecting labels.  Let $M = U \cup P \cup V$ be the result.  Since the upper and lower halves of $P$ are products, we may isotope $\mu$ down, and $\nu$ up, to lie in the middle torus $T \cross \{0\}$.  These curves now cross $|pv - qu| = 1$ times: that is, once.  We deduce that $M$ is homeomorphic to $S^3$; see for example Rolfsen~\cite[Section~9.B]{Rolfsen90}.  Now, the edge $a \cross \{0\} \subset P \subset M$ crosses these copies of $\mu$ and $\nu$ exactly $p$ and $q$ times, respectively.  So this edge gives a copy of $T(p,q)$ in $S^3$.   

We now form $X(p, q) = S^3 - n(T(p, q))$ by drilling $a \cross \{0\}$ out of $M$.  We do this by replacing the triangulation of $P$ by that of $Q$.  The resulting triangulation of $X(p, q)$ has a standard triangulation in $Q$ and a daisy chain triangulation in each of $U$ and $V$.   So the overall triangulation has constant tree-width, with the model tree being a path. 
\end{proof}

\subsection{Compression bodies}

Before defining sphere-decompositions of knots, we recall a few more notions from three-manifold topology. 

\begin{definition}
A \emph{one-handle} is a copy of the disk cross an interval, $D \cross [-1, 1]$, with \emph{attaching regions} $D \cross \{-1\}$ and $D \cross \{1\}$.  

Suppose that $F$ is a disjoint (and possibly empty) union of closed oriented surfaces.  We form a \emph{compression body} $C$ by starting with $F \cross [0, 1]$, taking the disjoint union with a three-ball $B$, and attaching one-handles to $(F \cross \{1\}) \cup \bdy B$.  We attach enough one-handles to ensure that $C$ is connected.  Then $\bdy_- C = F \cross \{0\}$ is the \emph{lower boundary} of $C$.  Also, $\bdy_+ C = \bdy C - \bdy_- C$ is the \emph{upper boundary} of $C$.  If $\bdy_- C = \emptyset$ then $C$ is a \emph{handlebody}. 
\end{definition}

Here is a concrete example which we will use repeatedly. 

\begin{definition}
Fix $n > 0$.  Suppose that $F$ is a disjoint union of $n - 1$ copies of $S^2$.  We thicken, take the disjoint union with a three-ball $B$, and attach $n - 1$ one-handles in such a way to obtain a compression body $C(n)$.  Note that $\bdy_+ C(n) \homeo S^2$.  We call $C(n)$ an \emph{$n$--holed three-sphere}.  Equally well, $C(n)$ has exactly $n$ boundary components, all spheres, and $C(n)$ embeds in $S^3$.  Equally well, $C(n)$ is obtained from $S^3$ by deleting $n$ small open balls with disjoint closures.
\end{definition}

The four simplest $n$-holed three-spheres are the three-sphere (by convention), the three-ball, the shell $S^2 \cross [0, 1]$, and the \emph{solid pants}: the thrice-holed three-sphere $C(3)$. 

Suppose that $B$ is the closed unit three-ball, centered at the origin of $\RR^3$.  Remove the open balls of radius $1/4$ centered at $(\pm 1/2, 0, 0)$, respectively to obtain the \emph{standard model} for $C(3)$.  The \emph{equatorial} pair of pants $P \subset C(3)$ is the intersection of $C(3)$ and the $xy$--plane.

\begin{definition}
Suppose that $C(3)$ is the standard model of the solid pants.  Suppose that $T \subset C(3)$ is a tangle.  We say $T$ is \emph{flat} if 
\begin{itemize}
\item
$T$ is properly embedded in $P \subset C(3)$, the equatorial pair of pants, 
\item
$T$ has no loop component, and
\item
all strands of $T$ are essential in $P$. 
\end{itemize}
\end{definition}


\begin{definition}
Suppose that $C$ is a compression body.  The construction of $C$ gives us a homeomorphism $C \homeo N \cup H$ where $N$ is either a three-ball or a copy of $\bdy_- C \times [0, 1]$ and where $H$ is a disjoint union of one-handles.  Suppose that $\alpha \subset C$ is a properly embedded arc.  
\begin{itemize}
\item
We call $\alpha$ a \emph{vertical} arc if $\alpha$ lies in $N$ and there has the form $\{\pt\} \cross [0, 1]$.
\item
We call $\alpha$ a \emph{bridge} if there is an embedded disk $D \subset C$ so that $\bdy D = \alpha \cup \beta$ with $\beta \subset \bdy_+ C$.  We call $D$ a \emph{bigon} for $\alpha$. 
\end{itemize}
A tangle $T \subset C$ is \emph{trivial} if 
\begin{itemize}
\item
every strand of $T$ is either vertical or is a bridge and 
\item
every bridge $\alpha \subset T$ has a bigon $D$ so that $D \cap T = \alpha$. 
\end{itemize}
\end{definition}



Note that when $C$ is a handlebody, a trivial tangle consists solely of bridges. We refer to~\reffig{Tangles} for examples.  Now we can begin to define the two types of surfaces that appear in \refthm{Main}. 

\begin{definition}
Suppose that $K$ is a knot in $S^3$.  A two-sphere $S \subset S^3$ is a \emph{bridge sphere} for $K$ if
\begin{itemize}
\item
$S$ is transverse to $K$ and 
 \item
the induced tangles in the two components of $S^3 - n(S)$ are trivial.  
\end{itemize}
The \emph{bridge number} of $K$ with respect to $S$ is the number of bridges in either trivial tangle. 
\end{definition}

\begin{definition}
Suppose that $M$ is a compact connected oriented three-manifold.  Suppose that $F \subset M$ is a properly embedded two-sided surface.  An embedded disk $(D, \bdy D) \subset (M, F)$ is a \emph{compressing disk} for $F$ if $D \cap \bdy M = \emptyset$, if $D \cap F = \bdy D$, and if $\bdy D$ is an essential loop in $F$, see~\reffig{compDisks} left for an illustration.  If $F$ does not admit any compressing disk, then $F$ is \emph{incompressible}.

A \emph{boundary compressing bigon} is an embedded disk $D \subset M$ with boundary $\bdy D = \alpha \cup \beta$ being the union of two arcs where $\alpha = D \cap F$ is an essential arc in $F$ and where $\beta = D \cap \bdy M$.   If $F$ does not admit any boundary compressing disk, then $F$ is \emph{boundary-incompressible}.

A closed surface $F$, embedded in the interior of $M$ is \emph{boundary parallel} if $F$ cuts a copy of $F \times [0, 1]$ off of $M$. 

A surface $F \subset M$ that is incompressible, is boundary-incompressible, and is not boundary-parallel is called \emph{essential}.
\end{definition}

\subsection{Multiple Heegaard splittings}

Here we recall the concept of a multiple Heegaard splitting, which is central to the work of Hayashi and Shimokawa~\cite{HayashiShimokawa01}.  We also state one of their theorems that we will rely on. 

Suppose that $M$ is a compact connected oriented three-manifold.  A \emph{Heegaard splitting} $\calC$ of $M$ is a decomposition of $M$ as a union of two compression bodies $C$ and $C'$, disjoint on their interiors, with $\bdy_+ C = \bdy_+ C'$ and with $\bdy M = \bdy_- C \cup \bdy_- C'$.  Following~\cite{ScharlemannSchultensSaito16, ScharlemannThompson94b}, a \emph{generalized Heegaard splitting} $\calC$ of $M$ is a path-like version of a Heegaard splitting; it is a decomposition of $M$ into a sequence of compression bodies $C_i$ and $C_i'$, disjoint on their interiors, where $\bdy_+ C_i = \bdy_+ C_i'$, where $\bdy_- C_i' = \bdy_- C_{i+1}$, and where $\bdy M = \bdy_- C_1 \cup \bdy_- C_n'$.

Following Hayashi and Shimokawa~\cite[page~303]{HayashiShimokawa01} we now generalize generalized Heegaard splittings.  We model the decomposition of $M$ on a graph instead of just a path.  We also must respect a given tangle $T$ in $M$.  Here is the definition. 

\begin{definition}
Suppose that $(M, T)$ is a three-manifold/tangle pair.  A \emph{multiple Heegaard splitting} $\calC$ of $(M, T)$ is a decomposition of $M$ into a union of compression bodies $\{C_i\}$, with the following properties. 
\begin{enumerate}
\item For each index $i$ there is some $j \neq i$ so that $\partial_+ C_i$ is identified with $\partial_+ C_j$.
\item For each index $i$ and for each component $F \subset \partial_- C_i$ either $F$ is a component of $\bdy M$ or there is an index $j$ so that $F$ is attached to some component $G \subset \partial_- C_j$.  Here we allow $j = i$ but require $G \neq F$.  
\item The surfaces $\cup \bdy_\pm C_i$ are transverse to $T$.  The tangle $T \cap C_i$ is trivial in $C_i$. 
\end{enumerate}
\end{definition}

When discussing the union of surfaces we will use the notation $\bdy_\pm \calC = \cup \bdy_\pm C_i$.  Again, following~\cite[page~303]{HayashiShimokawa01} we give a complexity of multiple Heegaard splittings. 

\begin{definition}
Suppose that $(M, T)$ is a three-manifold/tangle pair.  Suppose that $F \subset M$ is a connected closed surface embedded in the interior of $M$.  Suppose also that $F$ is transverse to $T$.  The \emph{complexity} of $F$ is the ordered pair $c(F) = (\text{genus(F)}, |F \cap T|)$.  

Now, suppose that $\calC$ is a multiple Heegaard splitting of $(M, T)$.
\begin{itemize}
\item 
The \emph{cost} of $\calC$ is the maximal number of intersections between a component of $\bdy_+ \calC$ and $T$.  That is, the cost is
\[
\max\{ |F \cap T| : \mbox{$F$ is a component of $\bdy_+ \calC$} \}. 
\]
\item 
The \emph{width} of $\calC$ is the ordered multiset of complexities
\[ 
w(\calC) = \{ c(F) : \mbox{$F$ is a component of $\bdy_+ \calC$} \}.
\] 
Here the complexities are listed in lexicographically non-increasing order.
\end{itemize}

The \emph{width} of a pair $(M,T)$ is the minimum possible width of a multiple Heegaard splitting of $(M,T)$, ordered lexicographically. 

A multiple Heegaard splitting of $(M,T)$ is \emph{thin} if it achieves the minimal width over all possible multiple Heegaard splittings of $(M,T)$.
\end{definition}

The following definitions, still from Hayashi and Shimokawa~\cite[page~304]{HayashiShimokawa01}, provide a notion of essential surfaces in the setting of a three-manifold/tangle pair. Recall that if $X \subset M$ is transverse to $T$ then $X_T$ denotes $X - n(T)$.

\begin{definition}
  Let $X$ be a compact orientable $3$-manifold, $T$ a $1$-manifold properly embedded in $X$, and $F$ a closed orientable $2$-manifold embedded transversely to $T$ in $X$. Let $\tilde{X}$ be the $3$-manifold obtained from $X$ by capping off all the spherical boundary components disjoint from $T$ with balls. An embedded disk $Q$ is said to be a \emph{thinning disk} of $F$ if $T \cap Q = T \cap \partial Q= \alpha$ is an arc and $Q \cap F$ contains the arc $cl(\partial Q \setminus \alpha)=\beta$ as a connected component. The surface $F$ is \emph{$T$--essential} if
  \begin{enumerate}
  \item $F_T$ is incompressible in $X_T$,
  \item $F$ has no thinning disk,
  \item no component $F_0$ of $F$ cobounds with a component $F_1$ of $\partial X$ in $\tilde{X}$ a $3$-manifold homeomorphic to $F_0 \times I$, possibly intersecting $T$ in vertical arcs, where $F_0=F_0 \times \{0\}$ and $F_0=F_1 \times \{1\}$, and
  \item no sphere component of $F$ bounds a ball disjoint from $T$ in $\tilde{X}$.
\end{enumerate}
  \end{definition}

Note that when $\partial X =\emptyset$, which will be the case throughout this paper, a $T$--essential surface $F$ is incompressible and boundary-incompressible in $X_T$. We can now state the necessary result from~\cite{HayashiShimokawa01}.

\begin{theorem}
\label{Thm:HS}
Suppose that $\calC$ is a thin multiple Heegaard splitting of a three-manifold/tangle pair $(M,T)$.  Then every component of $\bdy_- \calC$ is $T$--essential in $(M,T)$.
\end{theorem}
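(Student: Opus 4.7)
The plan is to argue by contradiction, showing that any failure of $T$--essentiality of some component $F \subset \partial_- \calC$ permits a local modification of $\calC$ that strictly decreases $w(\calC)$ in the lexicographic order, contradicting the choice of $\calC$ as thin. Four cases arise according to which of the four defining conditions of $T$--essentiality fails, and the strategy in each case is to recognize $F$ as ``pushable'' across an adjacent compression body, using the failure to trade intersections with $T$, or genus, from a $\bdy_+$--component into a $\bdy_-$--component.

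First I would dispose of the two easy cases. If a sphere component of $F$ bounds a ball disjoint from $T$ in $\tilde{M}$, then this component together with the $C_i$ on its interior can be excised from $\calC$ (its $\bdy_+$--surfaces are removed from the width multiset), producing a splitting of strictly smaller width. Similarly, if some component $F_0 \subset F$ cobounds an $I$--bundle region with a boundary component of $M$, possibly met by $T$ in vertical arcs, then the compression bodies lying in this product region can be collapsed; the relevant $\bdy_+$--surfaces disappear from $w(\calC)$ without introducing any new $\bdy_+$--component of equal or greater complexity, again reducing the width.

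The heart of the proof is the remaining two cases. Suppose first that $F_T$ admits a compressing disk $D$ in $M_T$. Since $F \subset \bdy_- C_i$ for some $i$, the disk $D$ lies, say, on the $C_i$ side of $F$. One pushes $D$ across $C_i$ to the adjacent $\bdy_+$--surface $S_i = \bdy_+ C_i$, where it can be arranged (by standard isotopies, using that tangle strands in $C_i$ are trivial) to become a compressing disk for $S_i$ disjoint from $T$, or to be combined with meridian disks of $C_i$. Performing this compression on $S_i$ replaces it with a surface of strictly smaller genus (and no new intersections with $T$), so the lex--largest complexity in $w(\calC)$ that changes strictly decreases. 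A symmetric argument handles the case where $D$ lies on the opposite side.

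If instead $F$ has a thinning disk $Q$, so $T \cap Q = \alpha$ is an arc and $\partial Q \setminus \alpha = \beta \subset F$, then $Q$ encodes a way to isotope a strand of $T$ across $F$ into the compression body on the other side. Carrying out this isotopy rearranges the bridges of $T \cap C_i$ and $T \cap C_j$, where $F$ separates $C_i$ from $C_j$; the effect on the nearby $\bdy_+$--surface $S_i$ or $S_j$ is to reduce $|S \cap T|$ by two (a bridge of $T$ is removed from one side and absorbed into the other), again lowering the relevant entry of $w(\calC)$.

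The main obstacle in writing this carefully is the bookkeeping: one must verify that each local move (compression, collapse, strand--swap) actually produces a valid multiple Heegaard splitting in the sense of the three defining conditions, and that the resulting multiset $w(\calC')$ is strictly smaller than $w(\calC)$ in the lexicographic order on non-increasing sequences of pairs. The most delicate point is the compression case, where one has to be sure that the disk $D$ can be arranged so that the new $\bdy_+$--component has genus strictly smaller than the lex--maximum among components whose complexity changes, and that no accidentally larger complexity component is introduced by the rearrangement — this is handled by pairing each compression with an appropriate reorganization of the graph structure encoding the splitting, as in \cite{HayashiShimokawa01} and earlier in \cite{ScharlemannThompson94b}.
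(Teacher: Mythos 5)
The paper does not actually prove this statement: it is imported wholesale from Hayashi and Shimokawa, as a combination of their Theorem~1.1 and Lemma~2.3~\cite{HayashiShimokawa01}. Your outline is a reasonable description of the \emph{kind} of thin-position argument that their paper carries out, but as a proof it has genuine gaps, and in one place the accounting is wrong. The central problem is the compression case. A compressing disk $D$ for $F_T$ in $M_T$ does not live ``on one side of $F$ inside $C_i$'': the lower boundary of a compression body is incompressible in that compression body, so $D$ must pass through the adjacent thick surface(s), typically in many circles of intersection. Reducing this intersection pattern and converting $D$ into a move on the splitting is precisely the hard content of Hayashi--Shimokawa's argument (an innermost-disk analysis followed by an untelescoping that restructures the graph of compression bodies), and it cannot be summarized as ``push $D$ across $C_i$.'' Moreover, your claim that the resulting compression of $S_i=\bdy_+ C_i$ ``replaces it with a surface of strictly smaller genus'' is false in the case that matters most here: $\bdy D$ may be essential in $F_T$ while bounding a disk in $F$ that meets $T$. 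Compressing along such a disk changes $|\,\cdot\, \cap T|$ or splits off a sphere but does not lower genus, and indeed in the application in this paper every surface is already a sphere, so genus reduction is never the mechanism. Any correct width count must work with the second coordinate of the complexity.

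A second structural issue runs through all four of your cases: the width $w(\calC)$ is a multiset indexed by components of $\bdy_+\calC$ only, so no modification of a $\bdy_-$ component reduces the width directly. In the thinning-disk case you assert that isotoping a strand across $F$ ``reduces $|S \cap T|$ by two'' for some thick surface $S$, but this needs an argument: the isotopy is supported near $F$, and one must show it can be extended to drag the strand past $\bdy_+ C_i$ or $\bdy_+ C_j$ while keeping both tangles trivial, and that no other thick surface gains intersections. Likewise, in the two ``easy'' cases, deleting a sphere component of $\bdy_-\calC$ together with everything inside it leaves its partner $\bdy_-$ surface unmatched, violating condition~(2) of the definition of a multiple Heegaard splitting; one must cap off with a ball, absorb it into the neighbouring compression body, and re-verify triviality of the tangle there. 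None of these verifications is impossible---they are carried out in~\cite{HayashiShimokawa01}---but they constitute the actual proof rather than bookkeeping, so the proposal as written is an outline of a strategy rather than a proof.
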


\begin{proof}
This follows from Theorem~1.1 and Lemma~2.3 of~\cite{HayashiShimokawa01}.
\end{proof}


\section{Sphere-decompositions}

We connect multiple Heegaard splittings to tree-width by turning the Jordan curves of \refcor{Realize} into spheres, and broadening these into (almost) a multiple Heegaard splitting.

\begin{definition}
Let $K$ be a knot in $S^3$.  A \emph{sphere-decomposition} $\calS$ of $K$ is a finite collection of pairwise disjoint, embedded spheres in $S^3$ meeting $K$ transversely, and so that every component $X$ of $S^3 - n(\calS)$ is either
\begin{itemize}
\item
a three-ball, and $(X, X \cap K)$ is a trivial tangle, or
\item
a solid pants, and $(X, X \cap K)$ is a flat tangle.
\end{itemize}
\end{definition}


We now define various notions of complexity of a sphere-decomposition. 

\begin{definition}
The \emph{weight} of a sphere $S$ in a sphere-decomposition $\calS$ is the number of intersections $S \cap K$.  The \emph{cost} of $\calS$ is the weight of its heaviest sphere.  The \emph{width} of $\calS$ is the list of the weights of its spheres, with multiplicity, given in non-increasing order.  
\end{definition}

As an example, a bridge sphere $S$ for a knot in $S^3$ gives a sphere-decomposition with one sphere; the width is $\{2b\}$, where $b$ is number of bridges of $K$ on either side of $S$. 

As another example consider a pretzel knot, as in \reffig{Pretzel}, or even more generally a Montesinos knot consisting of three
rational tangles.  Then the three two-spheres about the three rational tangles give a sphere-decomposition of width $\{4, 4, 4\}$.  

\begin{figure}
  \centering
  \def\svgwidth{5cm}
\begingroup%
  \makeatletter%
  \providecommand\color[2][]{%
    \errmessage{(Inkscape) Color is used for the text in Inkscape, but the package 'color.sty' is not loaded}%
    \renewcommand\color[2][]{}%
  }%
  \providecommand\transparent[1]{%
    \errmessage{(Inkscape) Transparency is used (non-zero) for the text in Inkscape, but the package 'transparent.sty' is not loaded}%
    \renewcommand\transparent[1]{}%
  }%
  \providecommand\rotatebox[2]{#2}%
  \newcommand*\fsize{\dimexpr\f@size pt\relax}%
  \newcommand*\lineheight[1]{\fontsize{\fsize}{#1\fsize}\selectfont}%
  \ifx\svgwidth\undefined%
    \setlength{\unitlength}{212.598bp}%
    \ifx\svgscale\undefined%
      \relax%
    \else%
      \setlength{\unitlength}{\unitlength * \real{\svgscale}}%
    \fi%
  \else%
    \setlength{\unitlength}{\svgwidth}%
  \fi%
  \global\let\svgwidth\undefined%
  \global\let\svgscale\undefined%
  \makeatother%
  \begin{picture}(1,1.10000212)%
    \lineheight{1}%
    \setlength\tabcolsep{0pt}%
    \put(0,0){\includegraphics[width=\unitlength,page=1]{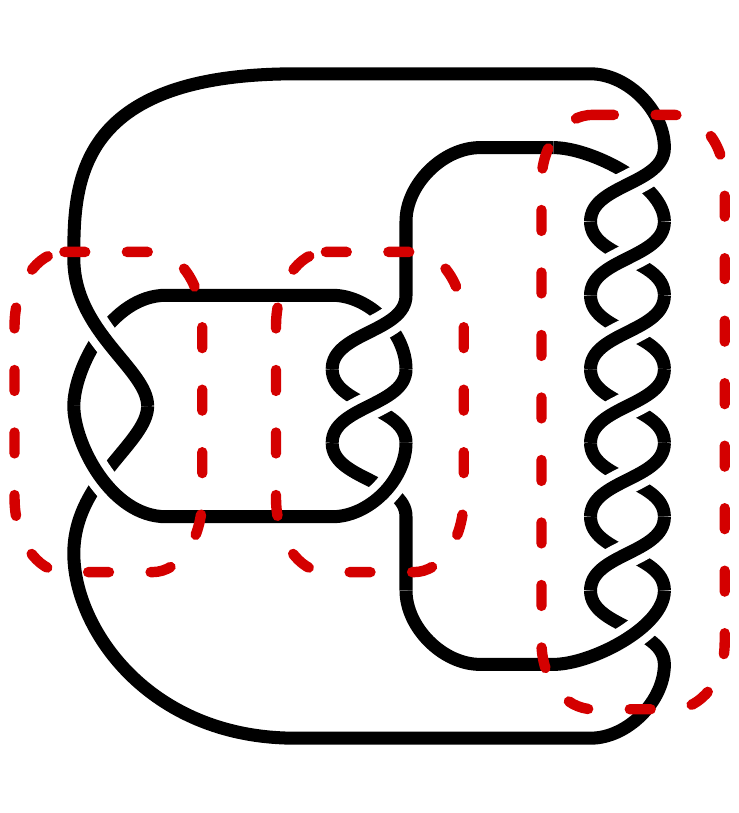}}%
    \put(0.22384061,0.21557242){\color[rgb]{0,0,0}\makebox(0,0)[t]{\lineheight{1.25}\smash{\begin{tabular}[t]{c}$S_1$\end{tabular}}}}%
    \put(0.4592224,0.21557242){\color[rgb]{0,0,0}\makebox(0,0)[t]{\lineheight{1.25}\smash{\begin{tabular}[t]{c}$S_2$\end{tabular}}}}%
    \put(0.90901147,0.01631351){\color[rgb]{0,0,0}\makebox(0,0)[t]{\lineheight{1.25}\smash{\begin{tabular}[t]{c}$S_3$\end{tabular}}}}%
  \end{picture}%
\endgroup%

\caption{The pretzel knot $P(-2,3,7)$ with a sphere-decomposition made of three spheres $S_1$, $S_2$ and $S_3$\protect\footnotemark.}
\label{Fig:Pretzel}
\end{figure}


A carving-decomposition of a knot diagram yields a sphere-decomposition of the knot, as follows.

\begin{lemma}
\label{Lem:Sphere}
Suppose that $K$ is a knot. Suppose that $D$ is a diagram of $K$ with carving-width at most $k$.  Then there is a sphere-decomposition of $K$ with cost at most $k$.
\end{lemma}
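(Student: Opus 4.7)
The plan is to convert the carving-decomposition of the diagram $D$ in $S^2$ into a decomposition of $S^3$ by disjoint $2$-spheres, each obtained by thickening a Jordan curve in the plane.

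First I would apply \refcor{Realize} to $D$---which, after any needed subdivision, is a bridgeless connected graph of bounded valence---to obtain pairwise disjoint Jordan curves $\{\gamma_e\}$ in $S^2$ realizing a bond carving-decomposition $(\calT, \phi)$ of $D$ of width at most $k$. These curves partition $S^2$ into complementary regions, one per node of $\calT$: each leaf $v$ gives a disk region $R_v$ containing the single vertex $\phi^{-1}(v)$ of $D$, and each internal node $v$ gives a pair-of-pants region $R_v$. I would then \emph{tighten} by isotoping the $\gamma_e$ (through realizing families) so as to minimize $\sum_e |\gamma_e \cap D|$. A standard innermost-disk argument then shows that in a tight configuration every arc of $D \cap R_v$ is essential in each pants region $R_v$, and every leaf region $R_v$ contains only the half-edges of $D$ at its vertex $\phi^{-1}(v)$.

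Next I would promote the curves to spheres. Root $\calT$ at an arbitrary leaf $v_0$. For each tree-edge $e$, let $D_e \subset S^2$ be the disk bounded by $\gamma_e$ not containing $\phi^{-1}(v_0)$, and for sufficiently small $\epsilon > 0$ let $\Sigma_e = \bdy N_\epsilon(D_e) \subset S^3$. Since the family $\{D_e\}$ is laminar, the resulting $2$-spheres $\Sigma_e$ are pairwise disjoint. The component of $S^3 \setminus \bigcup_e \Sigma_e$ that thickens $R_v$ is a $3$-ball if $v$ is a leaf, and a solid pants if $v$ is internal. Now I would verify cost and tangles. After isotoping $K$ to lie in an arbitrarily thin tubular neighborhood of $S^2$, each point of $\gamma_e \cap D$ produces exactly one transverse point of $\Sigma_e \cap K$, so $|\Sigma_e \cap K| = |\gamma_e \cap D| \leq k$ and the cost is at most $k$. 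In a leaf $3$-ball the tangle is a single crossing (or a valence-two subdivision point), manifestly trivial: push the over-strand into the upper hemisphere and the under-strand into the lower to obtain the required disjoint bigons. In an internal solid pants the tangle consists of arcs lying in the equatorial pants $R_v$, essential by tightness and loop-free because $R_v$ contains no vertex of $D$; hence the tangle is flat.

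The main obstacle is the tightening step. One must ensure that some innermost bigon bounded by a subarc of a $\gamma_e$ and an arc of $D$ can be found whose interior meets neither another $\gamma_{e'}$ nor another arc of $D$, so that the push is unobstructed and preserves both disjointness of the family and the bond carving-decomposition. This is handled by repeatedly descending into nested bigons---any obstructing sub-arc of some $\gamma_{e'}$ or some edge of $D$ inside a bigon itself delimits a smaller bigon that can be pushed first---with termination guaranteed by the strict decrease of $\sum_e |\gamma_e \cap D|$ under each such push.
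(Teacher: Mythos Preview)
Your proof is correct and follows essentially the same route as the paper: realize a bond carving-decomposition by disjoint Jordan curves via \refcor{Realize}, promote each curve to a two-sphere by capping above and below the diagram sphere, and read off the ball/solid-pants structure and the tangle types from the leaf/internal dichotomy of the carving tree. Your explicit tightening step (minimizing $\sum_e |\gamma_e \cap D|$ and using an innermost-bigon argument) is in fact a cleaner justification of flatness than the paper's one-line ``otherwise we could find a carving of lower width.'' One small technical slip: with a \emph{single} $\epsilon$ the spheres $\Sigma_e = \bdy N_\epsilon(D_e)$ are not pairwise disjoint when $D_{e'} \subset D_e$ (the two boundaries coincide over the interior of $D_{e'}$); you should let $\epsilon_e$ strictly decrease along the tree away from the root, or equivalently use the paper's description of capping each $\gamma_e$ with disks at distinct heights.
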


\begin{proof}
Let $S$ be the equatorial sphere containing the graph $D$.  Note that $D$ is bridgeless.  Thus, by \refcor{Realize}, there is a family $\Gamma$ of pairwise disjoint Jordan curves in $S$ realizing a minimal width carving-decomposition of $D$.  By assumption this has width at most $k$.

We obtain the knot $K$ by moving the overstrands of $D$ slightly towards the north pole of $S^3$ while moving the understrands slightly towards the south pole.  Every Jordan curve $\gamma \in \Gamma$ can be realized as the intersection of a two-sphere $S(\gamma)$ with the equatorial sphere $S$: we simply cap $\gamma$ off with disks above and below $S$.   We do this in such a way that resulting spheres $S(\gamma)$ are pairwise disjoint and only meet $K$ near $S$. 

We claim that the resulting family of spheres $\calS = \{S(\gamma)\}$ is a sphere-decomposition of $K$ of cost at most $k$.  First, the bound on the cost follows from the bound on the width of the initial carving-decomposition realized by $\Gamma$.  The arborescent and tri-valent structure of the carving-decomposition translates into the fact that every sphere in $\calS$ is adjacent, on each side, to either a pair of spheres (respectively to no other sphere) as the corresponding half-edge is not (respectively is) a leaf of the carving-tree.  Since we are in $S^3$, this implies that the connected components of $S^3 - n(\calS)$ are either three-balls (at the leaves) or solid pants.  At the leaves, the tangle in the three-ball is a small neighborhood of a single crossing in the original diagram $D$ (or of a valence two vertex).  Thus the tangles at the leaves are trivial.  Inside each solid pants $C$, the tangle lies in the equatorial pair of pants $P$.  This is because there are no crossings of $D$ in $P$.  Finally, the tangle is flat in $P$ as otherwise we could find a carving of lower width.
\end{proof}

\footnotetext{This picture and the next one are adapted from a public domain \href{https://commons.wikimedia.org/wiki/File:Pretzel_knot.svg}{figure} from the Wikimedia commons by Sakurambo.}

A sphere-decomposition is not in general a multiple Heegaard splitting, as the example of \reffig{Pretzel} shows.  However, the following lemma shows that a sphere-decomposition of small cost can be upgraded to a multiple Heegaard splitting of small cost.

\begin{lemma}
\label{Lem:Tubing}
Suppose that $K \subset S^3$ is a knot, and suppose that $K$ admits a sphere-decomposition $\calS$ of cost at most $k$.  Then $(S^3, K)$ admits a multiple Heegaard splitting $\calC$ of cost at most $2k$, where all components of $\bdy_\pm \calC$ are spheres.
\end{lemma}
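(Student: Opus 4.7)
The plan is to process the pieces of the sphere-decomposition $\calS$ one at a time and assemble the results into a multiple Heegaard splitting $\calC$. Each three-ball piece of $\calS$ carries a trivial tangle by definition, and is already a compression body of the desired form: a handlebody of genus zero with $\bdy_+$ equal to its boundary sphere and $\bdy_- = \emptyset$. Such pieces pass unchanged into $\calC$. All the work lies in the solid pants pieces.

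Let $C$ be a solid pants piece, with boundary spheres $S_1, S_2, S_3$ of weights at most $k$ and flat tangle $T$ lying in the equatorial pair of pants $P \subset C$. The natural compression body structure on $C$ has $\bdy_- = S_2 \cup S_3$ and $\bdy_+ = S_1$, but the flat tangle is not in general trivial in this structure, because strands of $T$ with both endpoints on $S_2 \cup S_3$ obstruct triviality. I would remove this obstruction by \emph{tubing}: pick a properly embedded arc $\gamma \subset C$ from $S_2$ to $S_3$, taken disjoint from $T$ whenever possible and otherwise running along a strand of $T$ with endpoints on $S_2$ and $S_3$, and let $N(\gamma)$ be its regular neighborhood. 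The surface $\widehat{S}$ obtained as $(S_2 \setminus (S_2 \cap N(\gamma))) \cup (\bdy N(\gamma) \cap \text{int}(C)) \cup (S_3 \setminus (S_3 \cap N(\gamma)))$ is a two-sphere (topologically $S_2 \# S_3$ along the tube). Its weight is $w(S_2) + w(S_3) \leq 2k$ when $\gamma$ is disjoint from $T$, and $w(S_2) + w(S_3) - 2 \leq 2k - 2$ when $\gamma$ runs along a strand of $T$, absorbing two intersection points. Either way the cost bound $2k$ holds.

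The sphere $\widehat{S}$ cuts $C$ into two compression bodies. On the $S_1$-side lies $C - N(\gamma)$, which is a shell $S^2 \times [0, 1]$: fusing the two ``inner'' boundary spheres of the solid pants through a tube produces a single sphere boundary there, so what remains is $S^3$ minus two balls. The remaining strands of $T$ in this shell run vertically from $S_1$ to $\widehat{S}$, so the tangle is trivial in the compression body structure with $\bdy_- = S_1$ and $\bdy_+ = \widehat{S}$. The other side of $\widehat{S}$ absorbs $N(\gamma)$ together with the pieces $B_2, B_3$ of $\calS$ adjacent to $C$ across $S_2, S_3$; the tangle there is obtained by concatenating the trivial tangles of $B_2, B_3$ through the tube, and remains trivial.

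The main obstacle is global compatibility. When two solid pants pieces of $\calS$ are adjacent, their tubings interact, and the $\bdy_+/\bdy_-$ labels on each surviving sphere must agree across both sides to satisfy conditions (1) and (2) of the multiple Heegaard splitting definition. I would address this by performing all the tubings simultaneously using pairwise disjoint arcs, each supported inside its own solid pants piece of $\calS$, and then verifying inductively that the dual graph of the resulting decomposition admits a consistent $\bdy_\pm$ labelling. After all tubings the compression bodies are three-balls and shells arranged along the dual tree of $\calS$, all boundary components are spheres, and the resulting assembly is the required multiple Heegaard splitting of cost at most $2k$.
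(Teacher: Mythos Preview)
Your tubing idea inside a solid pants is exactly the right local move, and the cost bound is correct.  The gap is in the global assembly.  You propose to \emph{discard} the spheres $S_2,S_3$ after tubing and to absorb the adjacent pieces $B_2,B_3$ into the other side of~$\widehat S$.  This only makes sense when $B_2,B_3$ happen to be three-balls; if either is itself a solid pants (which is the generic situation in a carving tree) the merged region is not a ball, the tangle there is no longer obviously trivial, and you have not said which two of \emph{its} boundary spheres get tubed next.  Even when the neighbours are balls, your $\bdy_\pm$ labels are inconsistent: the shell you build has $\bdy_- = S_1$, while the piece on the far side of $S_1$ is, in your scheme, an untouched three-ball with $\bdy_+ = S_1$.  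That is exactly the mismatch that conditions~(1) and~(2) of a multiple Heegaard splitting forbid.  Your final paragraph acknowledges the problem but ``verifying inductively that the dual graph admits a consistent labelling'' is precisely the content that is missing.  (A smaller slip: in the shell $C-N(\gamma)$ the strands of $T$ joining $S_2$ to $S_3$ become \emph{bridges} on~$\widehat S$, not vertical arcs; the tangle is still trivial, but not for the reason you state.)

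The fix the paper uses is to avoid discarding anything.  Keep every sphere of~$\calS$ and declare all of $\calS$ to be $\bdy_-\calC$.  Then insert one new ``thick'' sphere into \emph{each} component of $S^3-n(\calS)$: in a solid pants, the tubed sphere $S(C)$ (with the tube parallel to a strand $\alpha$ but keeping $\alpha$ \emph{outside} the tube); in a three-ball, a parallel push-off of its boundary.  The collection $\calR$ of these new spheres is $\bdy_+\calC$.  Now every surface of $\calS$ sees $\bdy_-$ on both sides and every surface of $\calR$ sees $\bdy_+$ on both sides, so the labelling is automatic and no induction is needed.  Each component of $S^3-n(\calR\cup\calS)$ is a ball, a shell, or a solid pants, and one checks directly that the induced tangle in each is trivial (vertical arcs and/or bridges).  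The only place where the cost can increase is at the tubed sphere in a solid pants, where it becomes $u+v\le 2k$.
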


\begin{proof}
The construction simply adds a ``thick'' sphere into every component of $S^3 - n(\calS)$.  Let $C$ be a solid pants of $S^3 - n(\calS)$; label its boundaries $U$, $V$, and $W$ and set $u = |K \cap U|$ and so on.  Suppose that $\alpha$ is a strand of $K \cap C$ so that, relabelling as needed, $\alpha$ meets $U$ and $V$.  We form a two-sphere $S(C)$ by tubing $U$ to $V$ using an unknotted tube parallel to the strand $\alpha$ but such that $\alpha$ is outside of the tube.  See \reffig{Pretzel2} for an example. 

\begin{figure}
  \centering
  \def\svgwidth{5cm}
\begingroup%
  \makeatletter%
  \providecommand\color[2][]{%
    \errmessage{(Inkscape) Color is used for the text in Inkscape, but the package 'color.sty' is not loaded}%
    \renewcommand\color[2][]{}%
  }%
  \providecommand\transparent[1]{%
    \errmessage{(Inkscape) Transparency is used (non-zero) for the text in Inkscape, but the package 'transparent.sty' is not loaded}%
    \renewcommand\transparent[1]{}%
  }%
  \providecommand\rotatebox[2]{#2}%
  \newcommand*\fsize{\dimexpr\f@size pt\relax}%
  \newcommand*\lineheight[1]{\fontsize{\fsize}{#1\fsize}\selectfont}%
  \ifx\svgwidth\undefined%
    \setlength{\unitlength}{216.55093073bp}%
    \ifx\svgscale\undefined%
      \relax%
    \else%
      \setlength{\unitlength}{\unitlength * \real{\svgscale}}%
    \fi%
  \else%
    \setlength{\unitlength}{\svgwidth}%
  \fi%
  \global\let\svgwidth\undefined%
  \global\let\svgscale\undefined%
  \makeatother%
  \begin{picture}(1,1.07992263)%
    \lineheight{1}%
    \setlength\tabcolsep{0pt}%
    \put(0,0){\includegraphics[width=\unitlength,page=1]{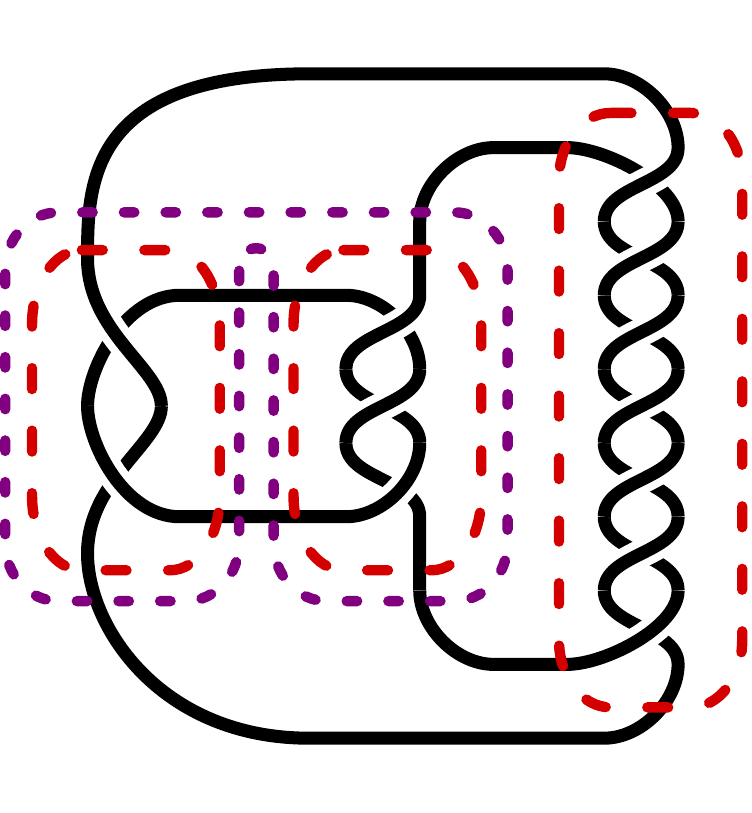}}%
    \put(0.34178434,0.86847476){\color[rgb]{0,0,0}\makebox(0,0)[t]{\lineheight{1.25}\smash{\begin{tabular}[t]{c}$S'$\end{tabular}}}}%
  \end{picture}%
\endgroup%

\caption{The pretzel knot $P(-2,3,7)$ with its natural sphere-decomposition, and with the addition of a ``thick'' sphere $S'$ in the solid-pants component, obtained by tubing $S_1$ to $S_2$.  The spheres of $\calS$ are dashed and the thick sphere is dotted (compare with \reffig{Pretzel}).}
\label{Fig:Pretzel2}
\end{figure}

The sphere $S(C)$ meets $K$ in $u + v$ points.  By the definition of the cost, we have $\max \{ u, v, w \} \leq k$, thus $u + v \leq 2k$.  We perform this tubing construction in every solid pants $C$ of $S^3 - n(\calS)$; we place the sphere $S(C)$ into the set $\calR$.  

For every three-ball component $B$ of $S^3 - n(\calS)$ we take $S(B)$ to be a push-off of $\bdy B$ into $B$.  We place the sphere $S(B)$ into the set $\calR$.  We claim that $\calC = S^3 - n(\calR \cup \calS)$ is a multiple Heegaard splitting, where $\bdy_+ \calC = \calR$ and where $\bdy_- \calC = \calS$.  Note that every component $C' \subset \calC$ is either a three-ball, a shell, or a solid pants.  So there is a unique (up to isotopy) compression body structure on $C'$ that has $\bdy_+ C' \subset \calR$ and $\bdy_- C' \subset \calS$.  Let $C$ be the component of $S^3 - n(\calS)$ containing $C'$. 
\begin{itemize}
\item
Suppose that $C'$ is a three-ball.  Then $K \cap C'$ is either one or two boundary parallel bridges, by the construction of $\calS$. 
\item
Suppose that $C'$ is a shell and $C$ is a three-ball.  Then $K \cap C'$ is four vertical arcs, because $\bdy_+ C'$ is parallel to $\bdy C$. 
\item
Suppose that $C'$ is a shell and $C$ is a solid pants.  We use the notation of the construction: $\bdy C = U \cup V \cup W$ and $\bdy_+ C'$ is obtained by tubing $U$ to $V$.  The tangle $K \cap C'$ consists of vertical arcs (coming from strands of $K \cap C$ connecting $U$ or $V$ to $W$) and bridges parallel into the upper boundary (coming from strands of $K \cap C$ connecting $U$ to $V$, or $U$ to itself, or $V$ to itself). 
\item
Suppose that $C'$ is a solid pants and thus $C$ is a solid pants.  Then $K \cap C'$, by construction, consists of vertical arcs only.
\end{itemize}
This concludes the proof.
\end{proof}

We are now ready to prove our main result. 

\begin{theorem}
\label{Thm:Main}
Suppose that $k$ is an integer and $K$ is a knot having a diagram with tree-width at most $k$.  Then either
\begin{enumerate}
\item there exists an essential planar meridional surface for $K$ with at most $8k + 8$ boundary components or
\item $K$ has bridge number at most $4k + 4$.
\end{enumerate}
\end{theorem}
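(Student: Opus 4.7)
The plan is to chain together the three structural results developed in the paper. First, given the diagram $D$ of $K$ with $\tw(D) \leq k$, I would pass to carving-width. Since $D$ is (after trivial subdivisions) a simple graph of maximum degree $4$, \refthm{CW} yields $\cw(D) \leq 4(k+1) = 4k+4$. Applying \reflem{Sphere} then gives a sphere-decomposition $\calS$ of $K$ with cost at most $4k+4$, and applying \reflem{Tubing} upgrades this to a multiple Heegaard splitting $\calC_0$ of $(S^3, K)$ whose cost is at most $2(4k+4) = 8k+8$ and every component of $\partial_\pm \calC_0$ is a sphere.

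Next, I would let $\calC$ be a thin multiple Heegaard splitting of $(S^3, K)$. Since $w(\calC) \leq w(\calC_0)$ lexicographically, and every component $F$ of $\bdy_+ \calC_0$ has complexity $c(F) = (0, |F \cap K|)$ with $|F \cap K| \leq 8k+8$, the maximum complexity in $\calC$ is at most $(0, 8k+8)$. In particular every component of $\bdy_+ \calC$ is a sphere meeting $K$ in at most $8k+8$ points. For each compression body $C_i$ of $\calC$, the upper boundary $\bdy_+ C_i$ is then a single sphere, which forces the components of $\bdy_- C_i$ to be spheres as well. Moreover, since $K \cap C_i$ is a trivial tangle, a count of vertical arcs versus bridges shows $|K \cap F| \leq |K \cap \bdy_+ C_i| \leq 8k+8$ for every component $F$ of $\bdy_- C_i$. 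Thus every component of $\bdy_- \calC$ is a sphere meeting $K$ in at most $8k+8$ points.

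Now I would split into two cases. If $\bdy_- \calC = \emptyset$, then every $C_i$ is a handlebody with spherical $\bdy_+$, hence a three-ball. The gluing condition on $\bdy_+$ forces $\calC$ to be the decomposition of $S^3$ into two three-balls along a single sphere $S$; the trivial tangle conditions make $S$ a bridge sphere of $K$ with $|S \cap K| \leq 8k+8$, giving bridge number at most $4k+4$, which is conclusion (2). Otherwise $\bdy_- \calC \neq \emptyset$, and \refthm{HS} gives some $T$-essential component $F \subset \bdy_- \calC$. By the discussion above, $F$ is a sphere and $F_K$ is a planar meridional surface with at most $8k+8$ boundary components.

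The one point requiring care, and probably the main obstacle, is verifying that $T$-essentiality of $F$ in the sense of Hayashi--Shimokawa entails essentiality in the sense used in the statement of \refthm{Main} (incompressible, boundary-incompressible, and not boundary-parallel). Incompressibility is condition (1) of $T$-essential, and boundary-incompressibility of $F_K$ follows from the absence of thinning disks (condition (2)), since any boundary-compressing bigon for $F_K$ in $X_K$ extends canonically to a thinning disk for $F$. For boundary-parallelism: a boundary-parallel sphere in this setting is either disjoint from $K$ and bounds a ball (excluded by condition (4) after capping), or meets $K$ in two points and cobounds a ball containing a single unknotted arc; in the latter case that arc together with the subdisk of $F$ on the parallel side provides a thinning disk, contradicting condition (2). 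Once this identification is in hand, the proof is complete.
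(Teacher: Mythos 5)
Your proposal is correct and follows essentially the same route as the paper: \refthm{CW}, then \reflem{Sphere} and \reflem{Tubing}, then passing to a thin multiple Heegaard splitting and invoking \refthm{HS}, with the same two-case analysis according to whether $\bdy_- \calC$ is empty. The extra details you supply (that the components of $\bdy_-\calC$ are spheres meeting $K$ in at most $8k+8$ points, and the translation of $T$--essential into essential) are points the paper treats more tersely, and your arguments for them are sound.
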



\begin{proof}
Let $D$ be a diagram  of $K$ that has tree-width at most $k$.  Then by \refthm{CW}, it has carving-width at most $4k + 4$.  Applying successively Lemmas~\ref{Lem:Sphere} and~\ref{Lem:Tubing} to this carving-decomposition, we obtain a multiple Heegaard splitting $\calC$ of $(S^3, K)$ of cost at most $8k + 8$ and in which all the surfaces are spheres.

Now let us consider a thin multiple Heegaard splitting $\calC'$ of $(S^3, K)$.  Since $\calC'$ is thin, its width is less than or equal to that of $\calC$.  Thus all of the surfaces of $\bdy_+ C'$ must be spheres, as genus is the first item in our measure of complexity.  We next deduce that the cost of $\calC'$ is at most that of $\calC$, so is at most $8k + 8$.

We now have two cases, depending on whether or not $\bdy_- \calC'$ is empty.  If it is non-empty, then, taking $T=K$ in \refthm{HS} tells us that every component of $\bdy_- \calC'$ is $K$--essential in $S^3$. Thus every component of $\bdy_- \calC'_K$ is incompressible and boundary-incompressible in $S^3_K$. Furthermore, since all the surfaces of $\bdy_+ C'$ are spheres, the components of $\bdy_- \calC'_K$ are not boundary-parallel, therefore they are essential. Since the cost of $\calC'$ is at most $8k + 8$, we deduce that $K$ admits an essential planar meridional surface with at most $8k + 8$ boundary components. 

Suppose instead that $\bdy_- \calC'$ is empty.  Thus $\bdy_+ \calC'$ consists of a single sphere $S$, which is necessarily a bridge sphere.  Again, the cost of $\calC'$ is at most $8k + 8$, so $S$ has at most $4k + 4$ bridges.  This concludes the proof.
\end{proof}

\bibliographystyle{hyperplain} 
\bibliography{bibfile}
\end{document}